\newtheorem{thm}{Theorem}[section]
\newtheorem{lemma}[thm]{Lemma}
\newtheorem{question}[thm]{Question}
\providecommand{\aut}{\mathop{\rm Aut \,}\nolimits}
\providecommand{\sym}{\mathop{\rm Sym \,}\nolimits}
\providecommand{\soc}{\mathop{\rm soc}\nolimits}
\providecommand{\Wr}{\mathop{\rm Wr}\nolimits}
\providecommand{\twr}{\mathop{\rm twr}\nolimits}
\providecommand{\Inn}{\mathop{\rm Inn}\nolimits}
\renewcommand{\\}{\vspace{3mm}}
\title{\bf Primitive permutation groups whose subdegrees are bounded above}
\author{\bf Simon M. Smith}
\email{simon.smith@chch.oxon.org, smsmit13@syr.edu}
\address{
   Department of Mathematics, Syracuse University \\
   Syracuse, New York, U.S.A.}
\date{\today}
\begin{document}
\maketitle

\markboth{\textsc{Primitive permutation groups with bounded subdegrees}}{\textsc{Primitive permutation groups with bounded subdegrees}}

\begin{abstract} 

If $G$ is a group of permutations of a set $\Omega$ and $\alpha \in \Omega$, then the {\em $\alpha$-suborbits} of $G$ are the orbits of the stabilizer $G_\alpha$ on $\Omega$. The cardinality of an $\alpha$-suborbit is called a {\em subdegree} of $G$. If the only $G$-invariant equivalence classes on $\Omega$ are the trivial and universal relations, then $G$ is said to be a {\em primitive} group of permutations of $\Omega$.

In this paper we determine the structure of all primitive permutation groups whose subdegrees are bounded above by a finite cardinal number.
\end{abstract}

\section{Introduction}

Recall that if $G$ is a group of permutations of a set $\Omega$, and $\alpha \in \Omega$, then the {\em $\alpha$-suborbits} (or sometimes just {\em suborbits}) of $G$ are the orbits of the point stabilizer $G_\alpha$ acting on $\Omega$. The cardinalities of these suborbits are called the {\em subdegrees} of $G$. If $G$ acts transitively on $\Omega$, and $\Omega$ admits no $G$-invariant equivalence relations except the universal relation $\{\Omega\}$ and the trivial relation $\{\{\beta\} : \beta \in \Omega\}$, then $G$ is said to act {\em primitively} on $\Omega$ and $G$ is called a {\em primitive} permutation group. If $G$ is primitive and $N  \trianglelefteq G$, then the orbits of $N$ are the equivalence classes of a $G$-invariant equivalence relation on $\Omega$, and so $N$ is either transitive on $\Omega$ or $N$ is trivial. A {\em minimal normal subgroup} of $G$ is a non-trivial normal subgroup of $G$ that does not properly contain any non-trivial normal subgroup of $G$. We say {\em countable} to mean a set is finite or countably infinite, and {\em denumerable} to mean it is countably infinite.

 Throughout, let $G$ be a primitive group of permutations of a set $\Omega$, with $\alpha$ some fixed element of $\Omega$. If the $\alpha$-subdegrees of $G$ are all finite, then $G$ is said to be {\em subdegree finite}, and it is easily seen that $\Omega$ is countable. Subdegree finite permutation groups are among the most  frequently encountered infinite permutation groups, as all automorphism groups of locally finite combinatorial structures (like graphs) are subdegree finite.

In \cite{me:SubdegreeGrowth}, a broad program of classifying infinite subdegree finite primitive permutation groups according to the growth rate of their subdegrees is advocated. In this paper we continue this program by fully classifying those groups whose subdegrees are bounded above. Our main result is the following. The various types of subdegree finite primitive permutation group that occur in this classification are described in Section~\ref{Types}.

\begin{thm} \label{main_thm} A primitive permutation group $G \leq \sym(\Omega)$ whose subdegrees are bounded above by a finite cardinal lies in precisely one of the following classes.

\begin{description}[labelindent=\parindent, font=\normalfont]
\item[I] Finite affine \label{test}
\item[II] Countable almost simple
\item[III] Product
	\begin{description}[labelindent=\parindent, font=\normalfont]
	\item[III(a)] Finite simple diagonal action
	\item[III(b)] Countable product action
	\item[III(c)] Finite twisted wreath action
	\end{description}
\item[IV] Denumerable split extension
\end{description}
\end{thm}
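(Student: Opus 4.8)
The plan is to reinterpret the bounded-subdegree hypothesis as a uniform commensurability condition on the point stabiliser and then feed this into a theorem of Schlichting. The first observation is that the subdegree associated with the point $\beta = \alpha^g$ is exactly the index $[G_\alpha : G_\alpha \cap G_\alpha^g]$, since $h \in G_\alpha$ fixes $\alpha^g$ precisely when $ghg^{-1} \in G_\alpha$, that is, when $h \in G_\alpha^g$. Thus the subdegrees being bounded above by a finite cardinal $d$ is the statement that $[G_\alpha : G_\alpha \cap G_\alpha^g] \leq d$ for all $g \in G$; equivalently, $G_\alpha$ is \emph{uniformly commensurated} in $G$. (Merely \emph{finite} subdegrees would give only that $G_\alpha$ is commensurated, which is far weaker; this is the structural reason why boundedness, rather than finiteness, is the correct hypothesis here.) By Schlichting's theorem, sharpened by Bergman and Lenstra, a uniformly commensurated subgroup is commensurable with a normal subgroup $K \trianglelefteq G$, and this $K$ drives a fundamental dichotomy. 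If $K$ is nontrivial then primitivity makes it transitive, so $|\Omega| = [K : K \cap G_\alpha]$; but commensurability forces this index to be finite, whence $\Omega$ is finite. If instead $K$ is trivial, then $G_\alpha$, being commensurable with $K$, is itself finite. Hence \emph{either $\Omega$ is finite, or $G_\alpha$ is finite}, and the proof splits into these two regimes.

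In the finite regime $G$ is a finite primitive permutation group, and I would invoke the O'Nan--Scott theorem and then check that each of its types lands in exactly one listed class: the affine type is I, the almost simple type is II, the (holomorph of) simple diagonal type is III(a), the twisted wreath type is III(c), and the product type---together with the compound diagonal and compound holomorph types, which arise as product actions built from diagonal-type components---is III(b). This regime contributes the finite members of classes I through III.

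In the infinite regime $\Omega$ is denumerable and $G_\alpha$ is finite, and this is where the novel content lies. Each nontrivial orbital graph is connected, by primitivity, and locally finite of valency at most $d$, and $G$ acts on it vertex- and arc-transitively with finite vertex stabiliser; a standard cocompactness argument then shows that $\soc(G)$ is finitely generated. A primitive group has one or two minimal normal subgroups, two forcing them to be regular, isomorphic and mutually centralising. I would analyse a minimal normal subgroup $M$, transitive with finite stabiliser $M_\alpha = M \cap G_\alpha$. If $M$ is abelian, finite generation makes it elementary abelian and hence finite, contradicting $|\Omega| = \infty$; so $M$ is nonabelian. If $M$ is regular the split extension $G = M \rtimes G_\alpha$ puts us in class IV, while if $M$ is nonregular then---the finiteness of $G_\alpha$ excluding the diagonal actions, whose stabilisers would be infinite---$M$ is either simple (class II) or a power $T^k$ realised in product action (class III(b)).

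The main obstacle is the structural step in the infinite regime: proving that a finitely generated minimal normal subgroup of $G$ really is a direct power of a simple group $T$, with $T$ permitted to be an infinite simple group, and that the induced $G$-action on the direct factors matches one of the named types. Infinite characteristically simple groups need not decompose as products of simple groups, so one must use both the finite generation and the finiteness of the stabiliser to force the decomposition and to control the possibly infinite simple composition factor. A secondary, more routine task is to verify that the classes are mutually exclusive, which follows by separating them according to the finiteness of $\Omega$, whether $\soc(G)$ is abelian, simple, or a proper power, and whether a minimal normal subgroup is regular.
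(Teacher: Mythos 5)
Your reduction is exactly the one the paper uses: identifying the subdegree of $\beta=\alpha^g$ with the index $|G_\alpha : G_\alpha \cap G_\alpha^g|$, feeding the uniform bound into Schlichting's theorem (Theorem~\ref{bl}), and using primitivity to conclude that the resulting normal subgroup $N$ is either trivial (so $G_\alpha$ is finite) or transitive with $|N : N\cap G_\alpha|$ finite (so $\Omega$ is finite). That is precisely Lemma~\ref{lemma:bd_subd_implies_finite_stab}, and your remark that mere finiteness of the subdegrees would only yield a commensurated, rather than uniformly commensurated, stabilizer is the right way to see why boundedness is the correct hypothesis. The finite regime is likewise handled as in the paper, by quoting the O'Nan--Scott theorem and matching its types to classes I, II, III(a), III(b), III(c).

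Where you diverge is the infinite regime. The paper proves nothing new there: it quotes the classification of infinite primitive permutation groups with finite point stabilizers from the author's earlier work (yielding the infinite members of classes II, III(b) and IV), so the entire content of the present paper is the Schlichting reduction plus two cited classifications. You instead sketch a proof of that classification, and the sketch has a genuine, self-acknowledged hole at its centre: showing that a minimal normal subgroup of an infinite primitive group with finite point stabilizers is a finite direct power of a (possibly infinite, finitely generated) simple group, and that the resulting action falls into one of the named types. This is not routine --- infinite characteristically simple groups need not decompose as direct products of simple groups, and ruling out an infinite analogue of the diagonal action takes more than the observation that ``its stabilizers would be infinite'' --- and it is the subject of a separate paper. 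As written your argument is therefore incomplete; it becomes a complete (and then essentially identical) proof the moment the sketch is replaced by a citation of that classification, which is what the paper does.
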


Finite examples for types I, II, III(a), III(b), III(c), and infinite examples for types II, III(b) and IV exist.

\section{Types of primitive permutation group with bounded subdegrees}
\label{Types}

The finite types that occur in our classification are precisely those found in the O'Nan-Scott Theorem (first appearing in \cite{Scott:Onan_Scott}, with a modern formulation in \cite{liebeck&praeger&saxl:finite_onan_scott}). The infinite types that occur are precisely those found in the classification of the infinite primitive permutation groups with finite point stabilizers (\cite{sms_finite_stabilizers}); refer to \cite{cameron:permutation_groups}, \cite{liebeck&praeger&saxl:finite_onan_scott} and \cite{sms_finite_stabilizers} for detailed descriptions of the various types.  In every case $\Omega$ is a countable set and $\alpha \in \Omega$ is some fixed element. The group $G$ is finitely generated by elements of finite order. The socle $B$ of $G$ is the subgroup generated by all the minimal normal subgroups of $G$; there exists a non-trivial finitely generated simple group $K$ such that $B = K_1 \times \cdots \times K_m$, where $m\geq 1$ is finite and $K_i \cong K$ for $1 \leq i \leq m$. The stabilizer $G_\alpha$ is always finite, and when $G$ is infinite $B$ is the unique minimal normal subgroup of $G$.

\\

\begin{description}[font=\normalfont]
\item[I] {\em Finite affine.}
	In this case $K \cong Z_p$ for some prime $p$ and $B$ acts regularly on 	$\Omega$, so $|\Omega| = p^m$. We identify the set $\Omega$ with $B = Z_p^m$ 	so that $G$ is permutation isomorphic to a subgroup $H$ of $AGL(m, p)$, with $B$ the translation group and $G_\alpha$ identified with $H \cap GL(m, p)$. Furthermore, $H \cap GL(m, p)$ acts irreducibly on $B$.

\\

\item[II]{\em Countable almost simple.} \label{type:almost_simple}
	Here $m = 1$, $K$ is a non-regular non-abelian finitely generated simple group of finite index in 	$G$ and $K \leq G \leq \aut K$, with $B = K$.

\\

\item[III] {\em Product.}
	Here $m \geq 2$ and $K$ is non-abelian, finitely generated and simple. As is 	traditional, we subdivide this type into three distinct subtypes.

\\

	\begin{description}[font=\normalfont]
	\item[III(a)] {\em Finite simple diagonal action.} Here $G$ is finite. Define $D = \{ (k, k, \ldots, k) : k \in K\}$, and note that $D \leq K^m \cong B$. Let $W = K^m.(\text{Out } K \times S_m)$ be a (not necessarily split) extension of $K^m$ by $\text{Out } K \times S_m$. The group $D$ is called the diagonal subgroup, and there is an obvious action of $W$ on the set of (right) cosets of $D$ in $K^m$. We may take as a set $\Delta$ of coset representatives all those elements of $K^m$ which have the identity in their first coordinate. In this way, we have an action of $W$ on $\Delta$, with $|\Delta| = |K|^{m-1}$, and the stabilizer in $W$ of the representative of the coset $D.1$ is $\{(a, \ldots, a).\pi : a \in \aut K, \pi \in S_m\}$. We say $G$ is of diagonal type if $(G, \Omega)$ is permutation isomorphic to $(H, \Delta)$, where $K^m \leq H \leq W$.
	
	Identifying $(G, \Omega)$ and $(H, \Delta)$, we note that $G$ is primitive if and only if the subgroup $\overline{G}$ of $S_m$ induced by $G$ acting on the direct factors $T = \{K_1, \ldots, K_m\}$ of the socle $B$ preserves no non-trivial congruence. Thus, either (i) $\overline{G}$ is primitive on $T$, or (ii) $m=2$ and $\overline{G} = 1$.
	
\\

	\item[III(b)] {\em Countable product action.} Here for some finite $l > 1$ and some primitive permutation group $(H, \Gamma)$ we have $(G, \Omega)$ is permutation isomorphic to a subgroup of $(H \Wr S_l, \Gamma^l)$ acting via the product action, where, for some $\gamma \in \Gamma$, the stabilizer $G_\alpha$ is identified with a subgroup of $H_\gamma \Wr S_l$ and one of the following holds:
\begin{enumerate}
\item
	$H$ is countable and of type II, with $H_\gamma$ finite, $\soc(H) = K$ and $l=m$; or
\item
	$H$ is finite and of type III(a), with $\soc(H) = K^{m/l}$, and $G$ and $H$ both have at most two minimal normal subgroups.
\end{enumerate}

\\

	\item[III(c)] {\em Finite twisted wreath action.} Here $G$ and hence $K$ are finite, and $B \cong K^m$ acts regularly on $\Omega$. In this case there exists a transitive subgroup $P$ of $S_m$ such that $(G, \Omega)$ is permutation isomorphic to $(K \twr_\varphi P, \ \Omega)$, where $K \twr_\varphi P$ is a twisted wreath product, described as follows.
Recall $P$ acts transitively on $\{1, 2, \ldots, m\}$; let $Q$ be the stabilizer of $1$ in $P$. Suppose there is a homomorphism $\varphi: Q \rightarrow \aut K$ such that $\text{Im} (\varphi)$ contains $\Inn (K)$. Let
\[A = \{f: P \rightarrow K :  f(pq) = f(p)^{\varphi(q)} \ \ \forall p \in P, \ \forall q \in Q\}.\]

Now $A$ is a group under pointwise multiplication and $A \cong K^m \cong B$. The transitive group $P$ acts on $A$ in a natural way, with $f^p(x) = f(px)$ for all $p, x \in P$. The twisted wreath product $K \twr_\varphi P$ is then the semidirect product of $A$ with $P$, and the action of $K \twr_\varphi P$ on $\Omega$ is determined by taking $P$ to be the stabilizer of $\alpha$ in $K \twr_\varphi P$.
	\end{description}

\\

\item[IV] {\em Denumerable split extension.}
Groups $G$ and $K$ are infinite, and $B$ acts regularly on $\Omega$. Here $G$ is equal to the split extension $M.G_\alpha$ for some $\alpha \in \Omega$, and no non-identity element of $G_\alpha$ induces an inner automorphism of $M$.
\end{description}

\section{Primitive permutation groups with bounded subdegrees}

The following result is commonly known as Schlichting's Theorem (\cite{schlichting}); it was proved independently by Bergman and Lenstra (\cite{bergman_lenstra}).

\begin{thm} \label{bl}
Let $G$ be a group and $H$ a subgroup. Then the following conditions are equivalent:
\begin{enumerate}
\item
	the set of indices $\{ |H: H \cap gHg^{-1} | : g \in G \}$ has a finite upper bound;
\item
	there exists a normal subgroup $N  \trianglelefteq G$ such that both $|H : H \cap N|$ and $|N : H \cap N|$ are finite. \qed
\end{enumerate}
\end{thm}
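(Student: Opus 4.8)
The equivalence splits into an easy direction and a hard one, so I would treat them separately. The implication (ii)$\,\Rightarrow\,$(i) is elementary and I would dispatch it first. The only input is the sublemma that, for subgroups $A, C$ of a group $N$, one has $[A : A\cap C]\le [N:C]$ (the map $a(A\cap C)\mapsto aC$ from $A/(A\cap C)$ to $N/C$ is injective). Assuming $N\trianglelefteq G$ with $d:=[H:H\cap N]$ and $e:=[N:H\cap N]$ finite, I would note that normality gives $(H\cap N)^g = H^g\cap N$, so $H\cap H^g\supseteq (H\cap N)\cap (H\cap N)^g$, whence $[H:H\cap H^g]\le [H:H\cap N]\cdot[H\cap N:(H\cap N)\cap(H\cap N)^g]\le d\cdot[N:(H\cap N)^g]=de$. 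Thus $de$ is a uniform bound, establishing (i).

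For (i)$\,\Rightarrow\,$(ii) I would first record two preliminary reductions. Writing $n$ for the bound in (i), replacing $g$ by $g^{-1}$ and conjugating shows that $[H^g:H\cap H^g]\le n$ as well, so \emph{all} conjugates of $H$ are commensurable with $H$, with both indices at most $n$; since commensurability is transitive, the conjugation action of $G$ preserves the whole commensurability class $\mathcal C$ of $H$, and finite intersections of members of $\mathcal C$ again lie in $\mathcal C$. Secondly, passing to $G/C$, $H/C$ where $C=\bigcap_g H^g$ is the core lets me assume the action of $G$ on $\Omega:=G/H$ is faithful; here condition (i) is exactly the statement that every suborbit of $G$ on $\Omega$ (every $H$-orbit, of size $[H:H\cap H^{g^{-1}}]$) has cardinality at most $n$, which ties the theorem directly to the paper's hypothesis. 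The goal is then a normal subgroup lying in $\mathcal C$. I would stress at this point why the two obvious candidates fail: the core $\bigcap_g H^g$ is normal but may have infinite index in $H$ (too small), while $\langle\,H^{g_1}\cap H^{g_2}: g_1,g_2\in G\,\rangle$ is normal and has $[H:H\cap N]\le n$ but may have $[N:H\cap N]$ infinite (too big). The sought $N$ must sit strictly between these, and it is precisely here that the \emph{uniformity} of the bound, rather than mere commensuration, has to be exploited.

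My plan for producing this intermediate subgroup is a minimality argument inside $\mathcal C$. For $K\in\mathcal C$ define the conjugation-invariant defect $d(K):=\sup_{g}[K:K\cap K^g]$. A short estimate of exactly the same flavour as the easy direction (bounding $[K:K\cap K^g]$ by interposing $K\cap H$ and using the sublemma together with $[H:H\cap H^g]\le n$) shows $d(K)$ is finite for every $K\in\mathcal C$, and clearly $d(K^g)=d(K)$. Since $d$ takes values in the positive integers, it attains a minimum $d_0\ge 1$ at some $K_0\in\mathcal C$. If $d_0=1$ then $K_0\le K_0^g$ for all $g$, and finiteness of the (equal) indices forces $K_0=K_0^g$, i.e. $K_0\trianglelefteq G$; being in $\mathcal C$ it is commensurable with $H$, giving (ii). So the whole weight of the theorem rests on the descent step: I must show that any $K\in\mathcal C$ with $d(K)\ge 2$ admits some $K'\in\mathcal C$ with $d(K')<d(K)$, obtained by intersecting $K$ with a carefully chosen finite set of its conjugates so as to absorb the ``worst'' conjugate while keeping $K'$ commensurable with $H$. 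This descent is the main obstacle, and it is exactly where the finite bound $n$ (as opposed to bare pairwise commensurability) is indispensable, since it is what prevents the intersections involved from leaving the commensurability class and what forces the defect to decrease rather than merely persist; I would isolate it as a separate lemma and prove it by a direct analysis of the conjugates $K^g$ realizing the defect.
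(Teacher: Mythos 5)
First, a point of comparison: the paper does not prove Theorem~\ref{bl} at all. It is quoted as Schlichting's Theorem with references to \cite{schlichting} and \cite{bergman_lenstra} and closed with an immediate \qed, so your proposal must stand entirely on its own. Your direction (ii)$\Rightarrow$(i) does: the chain $[H:H\cap H^g]\le [H:H\cap N]\cdot[H\cap N:(H\cap N)\cap (H\cap N)^g]\le d\cdot[N:(H\cap N)^g]=de$ is correct, using normality of $N$ only to place $(H\cap N)^g$ inside $N$. Your framing of (i)$\Rightarrow$(ii) is also sound as far as it goes: the reductions are valid, $d(K)=\sup_g[K:K\cap K^g]$ is indeed finite on the commensurability class $\mathcal C$ (interpose $K\cap H\cap H^g\cap K^g$), and your diagnosis that the core may be too small while the join may be too large is accurate (for the join, $\mathbb{Z}/2 * \mathbb{Z}/3$ already shows that the join of two commensurable subgroups need not be commensurable with them).

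The gap is the descent step, which you explicitly defer: you must show that any $K\in\mathcal C$ with $d(K)\ge 2$ admits $K'\in\mathcal C$ with $d(K')<d(K)$, and this is not a routine lemma to be filled in later --- it is essentially the entire content of the theorem. ``Intersecting $K$ with a carefully chosen finite set of its conjugates so as to absorb the worst conjugate'' is a hope, not an argument: such an intersection can create new conjugates realizing the same or a larger defect, and there is no a priori reason the functional $\sup_g[K:K\cap K^g]$ decreases under any operation of this kind; I am not aware of any proof of Schlichting's theorem that succeeds by minimizing this particular quantity. The published proofs run on a different engine. For instance, Bergman and Lenstra first observe that $g\mapsto [H:H\cap H^g]\big/[H^g:H\cap H^g]$ is a homomorphism into $\mathbb{Q}_{>0}$ with bounded, hence trivial, image (so the two indices always agree), and then build $N$ as the join of a carefully chosen, conjugation-invariant, uniformly bounded subfamily of $\mathcal C$, the hard point being a lemma guaranteeing that this particular join does not leave $\mathcal C$. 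Until your descent lemma is actually proved, or replaced by an argument of that kind, the implication (i)$\Rightarrow$(ii) is not established.
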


From this we obtain the following.

\begin{lemma} \label{lemma:bd_subd_implies_finite_stab} Suppose $G$ is a primitive group of permutations of an infinite set  $\Omega$, and $\alpha \in \Omega$. The $\alpha$-subdegrees of $G$ are bounded above by a finite cardinal if and only if $G_\alpha$ is a finite permutation group.
\end{lemma}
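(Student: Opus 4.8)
The plan is to identify the bounded-subdegree hypothesis with condition (i) of Schlichting's Theorem (Theorem~\ref{bl}) applied to the point stabilizer $H = G_\alpha$, extract the resulting normal subgroup, and then let primitivity eliminate the unwanted alternative. The reverse implication requires no real work: if $G_\alpha$ is finite then each $\alpha$-suborbit, being an orbit of $G_\alpha$, has cardinality at most $|G_\alpha|$, so the subdegrees are bounded above by the finite cardinal $|G_\alpha|$; note this holds whether or not $\Omega$ is infinite.

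For the forward implication I would first rewrite the subdegrees as indices. The $\alpha$-suborbit containing $\beta$ is the orbit $\beta^{G_\alpha}$, of cardinality $|G_\alpha : G_\alpha \cap G_\beta|$. Since $G$ is transitive there is $g \in G$ with $\beta = \alpha^g$, whence $G_\beta = g G_\alpha g^{-1}$ and this subdegree equals $|G_\alpha : G_\alpha \cap g G_\alpha g^{-1}|$. As $\beta$ ranges over $\Omega$, transitivity makes $\alpha^g$ range over all of $\Omega$, and the index depends only on $\beta = \alpha^g$; hence the set of subdegrees coincides exactly with the set of indices $\{\, |G_\alpha : G_\alpha \cap g G_\alpha g^{-1}| : g \in G \,\}$. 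Thus the hypothesis that the subdegrees are bounded above by a finite cardinal is precisely condition (i) of Theorem~\ref{bl} for $H = G_\alpha$, and applying Schlichting's Theorem yields a normal subgroup $N \trianglelefteq G$ with both $|G_\alpha : G_\alpha \cap N|$ and $|N : G_\alpha \cap N|$ finite.

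Primitivity then finishes the argument. The orbits of $N$ form the classes of a $G$-invariant equivalence relation on $\Omega$, so $N$ is either trivial or transitive. If $N$ were transitive then, as $\Omega$ is infinite, the orbit of $\alpha$ under $N$ would give $|N : N_\alpha| = |\Omega|$ infinite; but $N_\alpha = G_\alpha \cap N$, contradicting the finiteness of $|N : G_\alpha \cap N|$. Hence $N$ is trivial, so $G_\alpha \cap N = 1$ and $|G_\alpha| = |G_\alpha : G_\alpha \cap N|$ is finite, as required. The only point needing care is the reformulation in the second paragraph — verifying that the uniform bound over all $g \in G$ genuinely records the same collection of cardinals as the subdegrees — but since $g G_\alpha g^{-1} = G_{\alpha^g}$ depends only on the image point $\alpha^g$, the two descriptions agree and everything else reduces to direct application of the quoted theorem together with the trivial-or-transitive dichotomy.
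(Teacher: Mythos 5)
Your proof is correct and follows essentially the same route as the paper: rewrite the subdegrees as the indices $|G_\alpha : G_\alpha \cap g^{-1}G_\alpha g|$, invoke Schlichting's Theorem to obtain the normal subgroup $N$, and use the trivial-or-transitive dichotomy together with $|\Omega|$ infinite to force $N = 1$ and hence $G_\alpha$ finite. (The only cosmetic difference is your conjugation convention $G_{\alpha^g} = gG_\alpha g^{-1}$ versus the paper's $g^{-1}G_\alpha g$, which is immaterial since $g$ ranges over all of $G$.)
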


\begin{proof} Suppose $\Omega$ is an infinite set, and fix $\alpha \in \Omega$. Because $G$ is transitive, if $\beta \in \Omega$ then there exists $g \in G$ such that $\alpha^g = \beta$. Thus, for all $\beta \in \Omega$, the length of the suborbit $\beta^{G_\alpha}$ is $|G_\alpha : G_{\alpha, \beta}| = |G_\alpha : G_\alpha \cap g^{-1} G_\alpha g|$ for some $g \in G$. If the subdegrees of $G$ are bounded above by a finite cardinal, then so are the elements of the set $\{|G_\alpha : G_\alpha \cap g^{-1} G_\alpha g| : g \in G\}$. By Theorem~\ref{bl}, there exists $N  \trianglelefteq G$ such that $|G_\alpha : G_\alpha \cap N|$ and $|N : G_\alpha \cap N|$ are finite. Because $G$ is primitive, if $N$ is non-trivial then $N$ must act transitively on $\Omega$, and $|N : G_\alpha \cap N| = |\alpha^N| = |\Omega|$. Hence $N$ is trivial and $G_\alpha$ is finite.

The converse follows immediately from the observation that every subdegree of $G$ is bounded above by $|G_\alpha |$.
\end{proof}

If a point stabilizer in $G$ is finite then all point stabilizers in $G$ are finite. Infinite primitive permutation groups with finite point stabilizers were classified in \cite{sms_finite_stabilizers}.

\begin{thm}[\cite{sms_finite_stabilizers}] An infinite primitive permutation group with a finite point-stabilizer lies in precisely one of the following classes: II (infinite only), III(b) (infinite only) and IV. Examples exist for each type. \qed
\end{thm}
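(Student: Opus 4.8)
The plan is to transport the O'Nan--Scott analysis to the infinite setting, using the finiteness of $G_\alpha$ (Lemma~\ref{lemma:bd_subd_implies_finite_stab}) as the constraint that both eliminates most configurations and rigidifies the rest. The first task is to construct and describe the socle. Existence of a minimal normal subgroup --- not automatic for infinite groups --- follows from finiteness of $G_\alpha$: every nontrivial normal subgroup $N$ is transitive by primitivity, so $N_\alpha = N \cap G_\alpha$ is finite, and a nontrivial normal subgroup $N$ minimizing $|N_\alpha|$ must be minimal normal, since $1 \ne M \trianglelefteq G$ with $M \subsetneq N$ would give $|N:M| = |N_\alpha : M_\alpha| > 1$ and hence $|M_\alpha| < |N_\alpha|$. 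Such a minimal normal subgroup is characteristically simple, so equals $K_1 \times \cdots \times K_m$ with $K_i \cong K$ simple. Uniqueness of the minimal normal subgroup follows from a holomorph argument: two of them, $M_1, M_2$, would both be regular with $M_2 = C_G(M_1)$, whence $G = M_1 \rtimes G_\alpha$ and $G_\alpha \cong G/M_1 \supseteq M_1 M_2 / M_1 \cong M_2$, forcing the regular --- hence infinite --- group $M_2$ into the finite group $G_\alpha$. Thus $B = \soc(G)$ is this unique minimal normal subgroup. Finiteness of $m$ then follows because $G$ permutes the factors $K_i$ transitively (any orbit's product would otherwise be a proper nontrivial $G$-normal subgroup of $B$) through the quotient $G/B \cong G_\alpha/B_\alpha$, which is finite; and $G$ is finitely generated by a Cayley--Abels argument (a vertex-transitive action on a connected locally finite orbital graph with finite stabilizer is finitely generated), so that $B$, of finite index in $G$, and its quotient $K$ are finitely generated too.

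Next I would rule out abelian $K$: an abelian transitive normal subgroup of a primitive group is regular, forcing $|\Omega| = |K|^m$ finite, contrary to hypothesis, so $K$ is nonabelian simple. The first branch is whether $B$ is regular. If it is, then $B_\alpha = 1$, so $|B| = |\Omega|$ is infinite, $K$ and $G$ are infinite, and $G = B \rtimes G_\alpha$ splits. Here I would verify that primitivity of this split extension is equivalent to the stated requirement that no nonidentity element of $G_\alpha$ induces an inner automorphism of $B$ --- the analogue, for a regular nonabelian socle, of the twisted-wreath primitivity criterion, an offending element being exactly what would produce a second transitive normal subgroup and a proper block system. This is precisely type IV.

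The heart of the proof is the branch where $B$ is not regular, so $B_\alpha$ is a nontrivial \emph{finite} subgroup of $K^m$. Since $K$ is infinite, each coordinate projection $\pi_i(B_\alpha)$ is a \emph{proper} finite subgroup of $K_i$; in particular $B_\alpha$ contains no diagonal copy of $K$ (a ``full strip''), which would be infinite. This single observation eliminates the simple diagonal possibility --- confirming that type III(a) occurs only for finite $G$ --- and leaves two cases. If $m = 1$ then $B = K$ is nonregular with finite nontrivial stabilizer and $K \trianglelefteq G \le \aut K$ with $|G : K|$ finite: type II. If $m \ge 2$, I would invoke the O'Nan--Scott embedding (blow-up) theorem: the decoupling of coordinates forced by the absence of full strips lets one recover a coordinate space $\Gamma$ carrying an almost simple group $H$ of type II with finite point stabilizer $H_\gamma$, identify $\Omega$ with $\Gamma^m$, and embed $G$ into $H \Wr S_m$ acting by the product action with $G_\alpha \le H_\gamma \Wr S_m$; this is type III(b)(i), with the primitivity of $G$ passing to the required condition on the group induced on the $m$ coordinates.

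I expect the main obstacle to be precisely this $m \ge 2$ case: converting the negative data (no full strips, proper projections) into the positive construction of a primitive coordinate system $(\Gamma, H)$, a $G$-equivariant identification $\Omega \cong \Gamma^m$, and the embedding $G \hookrightarrow H \Wr S_m$, while checking that $H$ is genuinely of type II with $H_\gamma$ finite. Showing that $\Omega$ truly carries a product structure, rather than merely constraining the shape of $B_\alpha$, is the delicate step and the point at which care is most needed. The exclusivity assertion, that $G$ lies in precisely one class, is then immediate from the invariants obtained: type IV has a regular socle, type II has $m = 1$ with nonregular socle, and type III(b) has $m \ge 2$ with nonregular socle of product form, so no group can belong to two of the three classes.
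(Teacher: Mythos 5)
The first thing to say is that this paper does not prove the statement at all: it is quoted, without proof, from the author's earlier work \cite{sms_finite_stabilizers}, where the classification of infinite primitive permutation groups with finite point stabilizers is actually carried out. So your proposal is not competing with an argument in this paper; it is an attempt to reconstruct the content of the cited reference. Judged on those terms, your outline does follow the strategy that reference uses --- build the socle, branch on whether it is regular, and handle the non-regular case according to whether a product decomposition of $\Omega$ can be recovered --- and several of your individual steps are correct as stated: the Frattini-style computation $|N:M| = |N_\alpha : M_\alpha|$ giving existence of a minimal normal subgroup, the centralizer argument for its uniqueness, finiteness of $m$ via $G/B \cong G_\alpha / B_\alpha$, and the Cayley--Abels argument for finite generation.

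Two genuine gaps remain, one of them unacknowledged. First, you pass from ``the minimal normal subgroup is characteristically simple'' to ``it equals $K_1 \times \cdots \times K_m$ with $K_i$ simple.'' That implication is a theorem about \emph{finite} groups: an infinite characteristically simple group need not be a direct product of simple groups (McLain's groups are the standard counterexample), essentially because an infinite group need not possess a minimal normal subgroup of its own. This is exactly the point at which the finiteness of $G_\alpha$ must do real structural work, and it cannot be waved through. Your dismissal of the abelian case inherits the same problem: an infinite characteristically simple abelian group (an infinite elementary abelian $p$-group, say) is regular and infinite, so the conclusion ``$|\Omega| = |K|^m$ is finite'' is not available; one must instead argue that primitivity together with finiteness of $G_\alpha$ forces such a socle to be generated by the finite $G_\alpha$-orbit of a single nonidentity element, hence to be finitely generated and therefore finite. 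Second, the case $m \geq 2$ with non-regular socle --- producing the $G$-equivariant identification of $\Omega$ with $\Gamma^m$ and the embedding of $G$ into $H \Wr S_m$ in its product action --- is the bulk of \cite{sms_finite_stabilizers}, and you explicitly defer it. As a roadmap your proposal is sound and consistent with the cited classification; as a proof it is not yet one.
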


The finite primitive permutation groups were classified by the O'Nan-Scott Theorem.

\begin{thm}[\cite{liebeck&praeger&saxl:finite_onan_scott}] Any finite primitive permutation group lies in precisely one of the following classes: I, II (finite only), III(a), III(b) (finite only) and III(c). \qed
\end{thm}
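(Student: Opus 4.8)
The final statement is the O'Nan--Scott Theorem, recorded here from the literature; the plan is to sketch the classical proof. Let $G$ be a finite primitive group on $\Omega$, so that $G_\alpha$ is maximal, and set $B = \soc(G)$. First I would assemble the foundational facts about minimal normal subgroups. Each such subgroup $M$ is characteristically simple, hence $M \cong T^k$ for some simple group $T$ and $k \geq 1$, and by primitivity $M$ is transitive on $\Omega$. A centraliser argument then shows that $C_G(M)$ is either trivial, in which case $M$ is the unique minimal normal subgroup, or is a second minimal normal subgroup $M'$ with $M' = C_G(M)$, $M = C_G(M')$, $M \cong M'$, and both $M, M'$ regular. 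In particular $G$ has at most two minimal normal subgroups, and $B$ is either $T^k$ or $T^k \times T^k$.

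Next I would dispose of the abelian case. If $T$ is abelian then $T \cong Z_p$, the socle $B$ is elementary abelian and acts regularly, $G = B \rtimes G_\alpha$, and maximality of $G_\alpha$ forces it to act irreducibly on $B$; this is the affine class~I. So assume $T$ is non-abelian simple and write $B = T_1 \times \cdots \times T_k$, with $G$ permuting the factors transitively because $B$ is minimal normal over them. The decisive invariant is the projection $P := \pi_1(B_\alpha)$ of $B_\alpha = B \cap G_\alpha$ into $T_1$; transitivity on the factors makes $P$ well defined up to conjugacy.

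The proof then branches on $P$. If $P = 1$ then $B_\alpha = 1$, so $B$ acts regularly; these are the holomorph and twisted wreath groups, and when $B$ is the unique minimal normal subgroup one realises $G$ as a twisted wreath product $K \twr_\varphi P$, giving class~III(c). If $P = T$ then $B_\alpha$ is a diagonal subgroup of $B$, yielding the simple diagonal class~III(a); the degenerate case $k = 1$, where $B = T$ is simple and non-regular with $T \leq G \leq \aut T$, is precisely the almost simple class~II. Finally, if $1 < P < T$ then $G$ preserves a product structure: I would group the $k$ factors into $l$ blocks permuted by $G$, construct an embedding $G \hookrightarrow H \Wr S_l$ acting on $\Gamma^l$ via the product action, and identify $(H, \Gamma)$ as a primitive group of almost simple or simple diagonal type, giving class~III(b).

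To finish I would verify mutual exclusivity: the classes are separated by whether $B$ is abelian, by the number of minimal normal subgroups, by whether $B$ acts regularly, and by the value of $P$, so every $G$ falls into exactly one of them. The hard part will be the product-action branch $1 < P < T$. Establishing the wreath embedding requires the standard blow-up and embedding lemmas, a careful choice of the $G$-invariant partition of the simple factors into the $l$ blocks, and a proof that the induced component $(H, \Gamma)$ is itself primitive of the correct smaller type; keeping track of how $G$ simultaneously permutes the coordinates and acts diagonally inside each block is the main technical obstacle, with the centraliser and regularity dichotomy of the first step underpinning the entire case division.
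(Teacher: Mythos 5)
The paper offers no proof of this statement: it is the O'Nan--Scott Theorem, quoted from Liebeck, Praeger and Saxl and recorded with an immediate end-of-proof marker, so the only comparison available is between your sketch and the cited literature. Your skeleton is the standard one --- minimal normal subgroups are characteristically simple and transitive, the centraliser argument bounding their number by two and forcing regularity when there are two, the abelian case giving class I, and then branching on the projection $P$ of $B_\alpha$ into a simple factor --- and at that resolution it is the right outline.

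Two concrete issues remain. First, your case division misassigns the product actions with diagonal components. When $P = T$, the subgroup $B_\alpha$ is subdirect in $T_1 \times \cdots \times T_k$ with full projections, hence is a direct product of full diagonal subgroups supported on the blocks of a $G_\alpha$-invariant partition of $\{1, \ldots, k\}$; only when that partition has a single block do you land in class III(a), while a proper non-trivial partition yields class III(b) with a component of type III(a). As written you send all of $P = T$ to III(a) and then expect diagonal components to surface in the branch $1 < P < T$, where the component is in fact almost simple. Second, the sketch nowhere acknowledges that the argument --- in particular the analysis of the regular non-abelian socle case $P = 1$ and the mutual exclusivity of the classes --- relies on the Schreier conjecture, i.e.\ the solvability of $\mathrm{Out}(T)$ for $T$ a finite simple group, and hence on the classification of finite simple groups. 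Since the paper treats the theorem purely as a citation, neither point affects the paper itself, but both would need to be repaired before your outline could stand as a proof.
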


Combining these two results we obtain the following classification of primitive permutation groups with finite point stabilizers.

\begin{thm} Any primitive permutation group with a finite point stabilizer lies in precisely one of the following classes: I, II, III(a), III(b), III(c) and IV. \qed
\end{thm}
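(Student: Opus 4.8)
The plan is to argue by a dichotomy on whether $G$ is finite or infinite, invoking in each case exactly one of the two classification results just quoted, and then to amalgamate the two resulting lists.

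First I would dispose of the finite case. If $G$ is finite then its point stabilizer $G_\alpha$ is automatically finite, so the hypothesis costs nothing here, and the O'Nan--Scott Theorem (in the form cited from \cite{liebeck&praeger&saxl:finite_onan_scott}) places $G$ in precisely one of the classes I, II (finite only), III(a), III(b) (finite only) and III(c). Next, if $G$ is infinite, the standing hypothesis that $G_\alpha$ is finite is exactly the hypothesis of the classification of infinite primitive groups with finite point stabilizers (\cite{sms_finite_stabilizers}), which places $G$ in precisely one of the classes II (infinite only), III(b) (infinite only) and IV.

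It then remains to take the union of the two lists and check that membership is unique. The finite and infinite cases are mutually exclusive, and within each case the relevant theorem already supplies uniqueness; the qualifiers ``finite only'' and ``infinite only'' attached to types II and III(b) merely record which half of the finite/infinite divide a given group occupies. Dropping these qualifiers fuses the two halves into the single composite types II and III(b), and no group can then lie in two of the six resulting classes, since it is uniquely placed within whichever of the finite or infinite classifications applies to it. The union is therefore precisely I, II, III(a), III(b), III(c) and IV, with unique membership, as claimed.

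The only point requiring care -- and the nearest thing to an obstacle in an otherwise purely bookkeeping argument -- is to confirm that the descriptions of the composite types II and III(b) given in Section~\ref{Types} are genuinely uniform across the finite and infinite cases, so that a ``finite only'' type-II group and an ``infinite only'' type-II group do legitimately belong to the same class II, and similarly for III(b). Once this compatibility is granted, the two cited classifications slot together without overlap and the theorem follows.
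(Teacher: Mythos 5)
Your proposal is correct and matches the paper's argument, which simply combines the finite O'Nan--Scott classification with the classification of infinite primitive groups with finite stabilizers via the finite/infinite dichotomy. The paper states this combination without further elaboration, so your more explicit bookkeeping (including the remark about fusing the ``finite only'' and ``infinite only'' halves of types II and III(b)) is the same proof spelled out in slightly more detail.
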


This theorem, in conjunction with Lemma~\ref{lemma:bd_subd_implies_finite_stab}, establishes our main result, Theorem~\ref{main_thm}. Examples exist for each type (see \cite{dixon&mortimer} and \cite{sms_finite_stabilizers}).

\\

This work raises some interesting questions.

\begin{question} Which finite groups occur as point stabilizers of infinite primitive permutation groups? This is of course equivalent to asking which finite groups occur as point stabilizers of infinite primitive permutation groups whose subdegrees are bounded above by a finite cardinal.
\end{question}

\begin{question} For which finite sequences $n_1 < \cdots < n_m$ of natural numbers does there exist a primitive permutation group whose set of subdegrees is precisely $\{n_1, \ldots, n_m\}$?
\end{question}

\begin{question} If $G$ is a primitive permutation group whose subdegrees are bounded above by some finite cardinal $n$, what can be said of $G$ if one knows $n$?
\end{question}

%
% --- Bibliography
%

\end{document}